\newcommand{\len}{\ell}
\renewcommand{\G}{\mathbf{G}}
\newcommand{\I}{\mathrm{Id}}
\author{ Delaram Kahrobaei}
\address{Delaram Kahrobaei, University of York, Department of Computer Science, United Kingdom, The City University of New York, CUNY Graduate Center, PhD Program in Computer Science, New York, USA, New York University, Computer Science and Engineering Department.}
\email{delaram.kahrobaei@york.ac.uk, dk2572@nyu.edu, dkahrobaei@gc.cuny.edu}
\author{Keivan Mallahi-Karai}
\address{Jacobs University, Department of mathematics and logistic, 28759, Bremen, Germany.}
\email{k.mallahikarai@jacobs-university.de}
\title{Some applications of arithmetic groups in cryptography}
\begin{document}
\maketitle

\begin{abstract}
In this paper we will offer a new symmetric-key cryptographic scheme which is based on the existence of exponentially distorted subgroups in arithmetic groups. Aside from this, we will also provide new examples of distorted subgroups in $\SL_n(\ZZ[x])$ which can be utilized for the same purpose. 

\end{abstract}
%\tableofcontents

\section{Introduction}
%Most constructions of public-key crypto-systems typically hing upon one-way functions, that is, functions that are easy to compute, while finding the inverse images are prohibitively hard. 

Since the pioneering work of Anshel-Anshel-Goldfeld, group theory has proved to be a rich source of platforms for cryptographic primitives. While the initial constructions were based on abelian groups (e.g. additive and multiplicative group of finite fields), more recently various non-abelian groups have also emerged as effective tools for such constructions.   

Various concrete groups have been proposed as platforms for various encryption methods. For instance,  braid groups constructed by Emil Artin  \cite{Artin}
are examples of such groups based on which Anshel, Anshel and Goldfeld \cite{AAG}, and, independently, Ko, Lee, Cheon, Han, Kang and Park \cite{KLCHKP} constructed two cryptosystems. Note that in these constructions the difficulty of decryption relies on difficulty of solving a specific problem in the undellying group. For instance, in the case of braid groups, it is the conjugacy search problem that will make the platform useful for cryptography. Linear groups were proposed by Baumslag-Fine-Xu \cite{BFX}, this proposal is also proposes a symmetric-key encryption using matrices.

In this paper we focus on symmetric-key cryptographic scheme in section \ref{crypto2}, which use the idea of decoy. This idea has been discussed in series of series of paper by Cavallo, Di Crecsenzo, Kahrobaei, Khodjaeva, Shpilrain \cite{CDDS, DKKS1} on secure delegation of computation where decoys are used in a different context. In our context, the idea appears to be new and, moreover, yields some interesting questions in group theory.

In this paper, we will turn to class of groups, namely, arithmetic groups, that have not yet attracted as much attention in group-based cryptography. Roughly speaking, to construct an arithmetic group, one starts with a linear algebraic group defined over the field of rational numbers, and considers the elements of the group with integer entries. Recall that a linear algebraic group $\G$ is a subgroup of the group of $n \times n$ invertible matrices that is defined by the zero set 
of a finite number of polynomials. When these polynomials have rational coefficients, we say that $\G$ is defined over the field of rational numbers. 

Let us explain this by an example: consider the special linear group $\SL_n$ defined by the condition $\det(X)=1$. Then the associated arithmetic group is the group
$\SL_n(\ZZ)$ consisting of $n \times n$ matrices with integer entries. Arithmetic groups have been the subject of many studies. For a basic introduction to arithmetic groups, we refer the reader to \cite{Borel2}. The aspect of arithmetic groups that will be useful for us is the existence of U-elements in them. Roughly speaking, an element $g$ of a finitely generated group $G$ is called a U-element if for all $n \ge 1$ the element $g^n$ can be expressed as a word of length $ C \log( |n|+1)$ in terms of generators of the group. Arithmetic groups tend to have an abundance of unipotent elements. In fact, it's a celebrated theorem of Lubotzky, Mozes, Raghunathan \cite{LMR} that under some (necessary) conditions on the arithmetic group $G$, U-elements in $G$ are precisely the virtually unipotent elements. 
In this direction, we will prove the following theorem, which, among other things, will show provide a large number of ``arithmetic-like'' groups also have a wealth of U-elements, and hence can be used for our cryptographic platform. More precisely, we will show

\begin{theorem}\label{zxb} Let  $n \ge 3$, and $m \ge 1$. An element of the  group $\SL_n(\ZZ[x_1, \dots, x_m])$ is a U-element iff it is virtually unipotent. 
\end{theorem}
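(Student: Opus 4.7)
The plan is to prove the two directions separately. The easy direction (virtually unipotent $\Rightarrow$ U-element) will combine the Steinberg commutator relations with the LMR theorem applied inside the subgroup $\SL_n(\ZZ)$. The hard direction (U-element $\Rightarrow$ virtually unipotent) proceeds by specializing the $x_i$ to complex values and using the resulting operator-norm growth in $\SL_n(\mathbb{C})$ to rule out logarithmic word length.

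For the easy direction, first reduce to the unipotent case: if $g^k$ is unipotent, write $g^N = g^{N \bmod k}\,(g^k)^{\lfloor N/k\rfloor}$, so a bound of $O(\log N)$ for $(g^k)^q$ yields the same for $g^N$. Now let $u = \I + X$ be unipotent with $X^t = 0$; then
\[
u^N = \sum_{j=0}^{t-1} \binom{N}{j} X^j
\]
has entries that are polynomials in $N$ of degree $\le t-1$, with $x$-degree bounded independently of $N$. By Suslin's theorem (valid since $n \ge 3$), $u^N$ factors as a product of a bounded (in $N$) number of elementary matrices $E_{ij}(P(N,x))$ with $P \in \ZZ[N,x_1,\dots,x_m]$ of total degree bounded uniformly in $N$. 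Expanding $P(N,x) = \sum_a q_a(N)\, x^a$ over monomials and picking some $k \notin \{i,j\}$ (possible since $n \ge 3$), apply the Steinberg relation
\[
E_{ij}\bigl(q_a(N)\, x^a\bigr) \;=\; \bigl[E_{ik}(x^a),\; E_{kj}(q_a(N))\bigr].
\]
Here $E_{kj}(q_a(N))$ lies in the subgroup $\SL_n(\ZZ)$, and since $|q_a(N)| = O(N^{t-1})$, the LMR theorem provides a word of length $O(\log N)$ in a fixed finite generating set. The factor $E_{ik}(x^a)$ has length depending only on $|a|$, which is bounded uniformly in $N$. Assembling these contributions gives $u^N$ in length $O(\log N)$.

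For the hard direction, suppose $g$ is not virtually unipotent; I will show $g$ is not a U-element. I first claim there exists $z \in \mathbb{C}^m$ such that $g(z) \in \SL_n(\mathbb{C})$ has an eigenvalue of modulus $>1$. Otherwise, since $\det g(z)=1$, all eigenvalues of $g(z)$ lie on the unit circle for every $z$, so each elementary symmetric function satisfies $|e_k(\lambda_1(z),\dots,\lambda_n(z))| \le \binom{n}{k}$ uniformly in $z$. These are (up to sign) the coefficients of $\chi_g(T) \in \ZZ[x_1,\dots,x_m][T]$ evaluated at $z$; being integer-coefficient polynomials bounded on all of $\mathbb{C}^m$, they are constants by Liouville. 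Thus $\chi_g(T) \in \ZZ[T]$ and its roots are algebraic integers whose conjugates all lie on the unit circle, so Kronecker's theorem declares them roots of unity, making $g$ virtually unipotent --- a contradiction. With $z$ in hand, let $\lambda$ be an eigenvalue of $g(z)$ with $|\lambda|>1$, so that $\|g(z)^N\|_{\mathrm{op}} \ge |\lambda|^N$. A finite generating set $S$ of $\SL_n(\ZZ[x_1,\dots,x_m])$ (supplied by Suslin) specializes to a finite subset of $\SL_n(\mathbb{C})$ bounded in operator norm by some $M$; any word of length $\len$ representing $g^N$ then specializes to a matrix of norm $\le M^{\len}$, forcing $\len \ge N\log|\lambda|/\log M = \Omega(N)$, which is not $O(\log N)$.

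The main obstacle, I anticipate, lies in the decomposition step of the easy direction: presenting an arbitrary unipotent $u$ as a product of a bounded number of elementary matrices whose parameters are integer-coefficient polynomials in $N$ and $x$ of uniformly controlled degree. For upper-triangular unipotents this is immediate via Gaussian elimination, but the general case requires either an internal conjugation into triangular form inside $\SL_n(R)$ (which involves the integer conjugacy classes of unipotents) or a careful complexity analysis of Suslin's factorization procedure to keep the number of elementary factors uniform in $N$.
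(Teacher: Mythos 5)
Your proposal is correct in substance, and the two directions deserve separate comments. For the hard direction (U-element $\Rightarrow$ virtually unipotent) you take a genuinely different route from the paper: you specialize the variables to \emph{complex} points, use Liouville's theorem to force the characteristic polynomial into $\ZZ[T]$ when all specializations have unimodular spectrum, invoke Kronecker to get roots of unity, and otherwise kill logarithmic word length by operator-norm growth at a specialization with an eigenvalue off the unit circle. The paper instead specializes at \emph{integer} points, applies Lubotzky--Mozes--Raghunathan to each image $\pi_k(u)\in\SL_n(\ZZ)$, uses a uniform-$r$ lemma (the exponent $r$ with $(\pi_k(u)^r-\I)^n=0$ can be chosen independently of $k$) and the fact that a polynomial vanishing at all integers is zero. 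Your argument is more self-contained --- it avoids LMR entirely in this direction and replaces it with Liouville plus Kronecker --- while the paper's argument is shorter modulo quoting LMR. Both are valid; just make sure you close your Kronecker step by noting that $\chi_g(T)\in\ZZ[T]$ with all roots being roots of unity gives $(g^k-\I)^n=0$ over the function field by Cayley--Hamilton, which is the sense of ``virtually unipotent'' needed.

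For the easy direction you follow essentially the paper's strategy, and the ``main obstacle'' you flag at the end is not actually an obstacle: the uniformity in $N$ of the number of elementary factors comes for free once you apply Suslin's theorem \emph{once} to the single matrix $u^{N}$ viewed as an element of $\SL_n(\ZZ[N,x_1,\dots,x_m])$ with $N$ a formal variable, and then specialize $N$ to integers --- which is exactly what your phrase ``$P\in\ZZ[N,x_1,\dots,x_m]$'' already presupposes. The one genuine wrinkle you gloss over is that $\binom{N}{j}\notin\ZZ[N]$, so $u^N$ does not literally lie in $\SL_n(\ZZ[N,x])$; the paper fixes this by choosing a common denominator $d$ of the entries of $u^q$ and factoring $u^{dq}$ instead (the residues of $N$ modulo $d$ are then absorbed by the same finite-coset reduction you already use to pass from $g$ to $g^k$). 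With that adjustment, and noting that $E_{kj}(M)$ for an integer $M$ has length $O(\log|M|)$ by the elementary Fibonacci/commutator argument (no need to quote LMR here), your easy direction matches the paper's.
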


For the definition of U-elements, its history and relevant results, as well as the proof of Theorem \ref{zxb}, we will refer the reader to section 
\ref{universal}.

Our goal in this paper is to show how one can exploit properties of these groups to construct new cryptosystems. 
We propose a new platform for the private key encryption schemes developed by Chatterji, Kahrobaei, Lu. They use the fact that there are exponentially distorted subgroups in certain groups that the geodesic length problem is in polynomial time. In particular they propose certain classes of hyperbolic group. In this paper we propose that using distortion of the unipotent subgroups in lattices in higher rank Lie groups similar platforms can be constructed.

\section{The Cryptosystems Using Subgroup Distortion}\label{crypto}
The following cryptosystems have been proposed in \cite{CKL} who suggested using Gromov hyperbolic groups. Here we put forward new classes of discrete group for this purpose.
\subsection{The protocol I: basic idea} Assume that Alice and Bob would like to communicate over an insecure channel. Let $G=\langle g_1, \cdots ,g_l |R \rangle$ be a public group such that solving the geodesic length problem in $G$ is possible in polynomial time. Assume that $G$ has a large number of distorted subgroups. Let $H= \langle t_1,\cdots, t_s \rangle \subset\langle g_1, \cdots ,g_l  \rangle =G$ be a secret subgroup of $G$, that is distorted and shared only between Alice and Bob. 
\begin{enumerate}
	\item Suppose that the secret message is a integer $n \ge 1$. Alice picks $h\in H$ with $\ell_H(h)=n$, expresses $h$ in terms of generators of $G$ with $\ell_G(h)=m \ll n$ and sends $h$ to Bob.
	\item Bob then converts $h$ back  in terms of generators of $H$ and computes $\ell_H(g)=m$ in polynomial time according to assumption to recover $n$.
\end{enumerate}
\subsubsection{Security} Although $H$ is not known to anyone except to Alice and Bob and $h$ being sent with length $m\ll n$ gives infinitely many possible guesses for the eavesdropper Eve, the security of the scheme is weak since Eve will eventually intercept sufficiently many elements of $H$ to generate $H$ (one can think of the group ${\mathbb Z}$ of integers, it is enough to intercept two relatively prime integers to generate the whole group). 

\subsection{The protocol I: secure version}\label{crypto2}
In order to make it difficult for Eve to identify the subgroup $H$, suppose that Alice also sends along with $h$ occasionally elements of $G$ that do {\it not} belong to $H$. To determine how Bob can tell which elements belong to $H$ to retrieve the correct message, we will consider below the subgroup membership problem and the random number generator.
\subsubsection{Subgroup membership problem}
Suppose we have a group in which the subgroup membership problem can be solved efficiently. Then we will send some random words and the receiver first checks whether each word belongs to $H$ and then computes its length.\\

Protocol: Let $G = \langle g_1,\cdots,g_l \vert R_G\rangle$ be a group that is known to the public such that solving the geodesic length problem is possible in polynomial time and that $G$ has an abundance of distorted subgroups. Let $H= \langle h_1,\cdots,h_s \rangle $ be a shared secret subgroup of $G$ that is exponentially distorted. Assume that the subgroup membership problem in $G$ efficiently solvable.
\begin{enumerate}
	\item Alice picks $h\in H$ with $\ell_H(h)=n$, expresses $h=g_1 \cdots g_{m}$ in terms of generators of $G$ with $\ell_G(h)=m \ll n$. She randomly generates $a_0,\dots,a_m \in G \setminus H$  and sends these words to Bob.
	\item Since Bob knows the generating set for $H$, he finds $h \in H$ (since he could check the subgroup membership problem efficiently) he only uses $h\in H$, and expressed it in terms of generators of $H$ and computes $\ell_H(g)=n$ in polynomial time using the assumption for $G$ to recover $n$.
\end{enumerate}

\section{U-elements in lattices in higher rank Lie groups}
In this section, we will review some of the basic properties of those matrix groups that were used in Section \ref{crypto} for constructing the cryptosystems. As indicated there, the construction is based on existence of pairs $(G,H)$, consisting of a finitely generated group $G$, and a
subgroup $H$ of $G$ with the following properties:

\begin{enumerate}
\item There exists a constant $C_1>0$ such that for every $h \in H$, one can compute in a polynomial time a path from the identity to $h$ whose length is bounded from above by $C_1 \ell_G(h)$.
\item The membership problem in $H$ can be solved in polynomial time. 
\item $H$ is exponentially distorted in $G$, that is, there exists a constant $C_2>0$ such that for all $h \in H$ we have 
\[\ell_G( h) \le C_2 \log (1+ \ell_H(h) ). \]
\item $H$ has many conjugates in $G$. More precisely, the index of the normalizer $N_G(H)$ in $G$ is infinite. 
\end{enumerate}

Before we proceed, let us review some definitions. Let $G$ be a finitely generated group
with a fixed generating set $S$. For every $g \in G$, denote by $\len_{G,S}(g)$, or simply $\len_G(g)$ when $S$ is implicitly fixed, the least integer $k \ge 0$ such that $g$ can be expressed as a product $g= g_1^{e_1} \dots g_k^{e_k}$, where
$g_i \in S$ and $ e_i = \pm 1$ for all $ 1 \le i \le k$. The number $\len(g)$ is called the length of element $g$ with respect to the generating set $S$. One can easily show that if $S_1$ and $S_2$ are two finite generating sets for $G$,  then $ \ell_{G,S_1}(g) \le C \ell_{G,S_2}(g)$ for some constant $C>0$ and all $g \in G$. This simple fact shows that the choice of $S$ is immaterial for our discussion. 

We will now discuss some examples of groups that will appear in our construction. 
Let us start with the most prominent example, namely, the
special linear group $\SL_n(\ZZ)$, which consists of all matrices of determinant $1$ with entries in $\ZZ$.
A subgroup $H$ of $\SL_n(\ZZ)$ is called unipotent if it consists of unipotent matrices, that if for every $u \in H$ all the 
eigenvalues of $u$ are equal to $1$.

Let $H_n$ denote the subgroup of $\SL_n(\ZZ)$ consisting of all 
upper-triangular matrices with $1$ on the diagonal with all the off-diagonal entries which are not on the first row are equal to zero.
A typical element of $H_n$ has the following form:
\[ h= \begin{pmatrix}
1  & 0  & 0 &\dots & h_{1n} \\
 0 & 1  &  0 & \dots & h_{2n}\\
\vdots  & \vdots & \vdots & \vdots &   \vdots \\
0 & 0 & 0 & 1  & h_{n-1,n} \\
0 & 0 & 0 & 0 &1 
\end{pmatrix} \]

Note that $H_n$ is an abelian algebraic subgroup of $\SL_n(\ZZ)$ defined by linear equations
\[ x_{ ii}-1=0, \quad 1 \le i \le n,  \qquad x_{ij}=0, \quad  2 \le i \neq j \le n. \]

A simple corollary of this fact is that that the membership problem for an element $h \in \SL_n(\ZZ)$ in $H_n$ can be solved in polynomial time in size of entries of $h$. Let $g \in \SL_n(\ZZ)$ be an arbitrary element, and consider the conjugate subgroup 
$g^{-1}H_n g$. Note that that $g^{-1}H_ng$ is also an algebraic group, and its defining equations can be obtained by transforming the set of equations given above for $H$ using conjugation with $g$. In particular, the membership problem for all conjugates of $H_n$ can also be solved in polynomial time. 

%In fact, it will be more convenient to work with another subgroup of $G$. This subgroup, which is itself as subgroup
%$H_n$ and will be denoted by $A_n$, consists of those matrices in $H_n$ such that, aside from diagonal entries,  has only non-zero entries in their last column, i.e. a typical element of $A_n$ has the form:
%
%\[ a= \begin{pmatrix}
%1  & 0  & 0 &\dots & h_{1n} \\
% 0 & 1  &  0 & \dots & h_{2n} \\
%\vdots  & \vdots & \vdots & \vdots &   \vdots \\
%0 & 0 & 0 & 1  & h_{n-1,n} \\
%0 & 0 & 0 & 0 &1 
%\end{pmatrix} \]

\subsection{Distortion}
Suppose $H$ is a finitely generated subgroup of a finitely generated group $G$. Let $S_H$ and $S_G$ denote two fixed generating sets for $H$ and $G$ respectively. Recall that $H$ 
is exponentially distorted in $G$ if there exists a constant $C>0$ such that for each element $h \in H$, we have 
\[ \len_G(h) \le C \log(1+ \len_H(h)). \]
In other words, the geodesic joining the identity element to $k$ in $G$ is shorter by an exponentially large factor than the one in $H$ itself. An element $u \in G$ is called a U-element if the cyclic subgroup $H= \langle u \rangle$ is infinite and exponentially distorted in $G$. In other words, $g$ is a U-element if 
\[ \len_G(g^n) \le C \log (1+ |n|) \]
holds for all $n \ge 1$, and some $C>0$. It is easy to see that the definition of $U$-elements does not depend on the choice of the generating set $S$. Let us recall some examples of exponential distortion.

\begin{example}
Let $G$ denote the Baumslag-Solitar group $B(1,2)$, generated by elements $t$ and $a$ subject to the single relation $t^{-1}at=a^2$. Denote by $S$ the generating set consisting of $a^{\pm 1}$ and $t^{ \pm 1}$, and by $K$ the infinite cyclic group generated by $a$. It is easy to see that $\ell_K(a^n)$ grows linearly with $n$. On the other hand, since $t^{-k}a t^k= a^{2^k}$, we have 
$\ell_G(a^{2^k}) \le 2k+1$. By expressing an arbitrary integer $n$ in base $2$, one can easily show that the bound $ \ell_G(a^n) = O( \log n)$ holds for all $n \ge 2$.  

This group can indeed be realised as a linear group. To see this, set 
\[ t = \begin{pmatrix}
2  & 0    \\
 0 & 1 \\
\end{pmatrix}, \qquad a= \begin{pmatrix}
1  & 1    \\
 0 & 1 \\
\end{pmatrix} \]
It is easy to see that the pair $(t,a)$ satisfy the relation $ tat^{-1}=a^2$, and hence one can write $a^n$ as a word of length $O( \log n)$ in terms of $a$ and $t$. 
\end{example}

A celebrated theorem of Lubotzky, Mozes, and Raghunathan \cite{LMR} classifies U-elements for the class of $S$-arithmetic groups in higher rank. This class includes groups such as $\SL_n(\ZZ)$ for $n \ge 3$. As we will be mostly interested in applications, we will only mention a special case of their result. For convenience of the reader we will also provide a proof of a special case which also goes back to \cite{LMR}.

\begin{theorem}[Lubotzky, Mozes, Raghunathan]
For $n \ge 3$, an element $g \in \SL_n(\ZZ)$ is a U-element if and only if $g$ is virtually unipotent, that is, there exists $k \ge 1$ such that $g^k$ is unipotent. 
\end{theorem}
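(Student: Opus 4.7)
The plan is to prove each implication in turn. The direction ``U-element implies virtually unipotent'' is the softer one and relies on a matrix-norm bound combined with Kronecker's theorem. The converse requires the construction of exponentially short words for powers of unipotent elements, which is the content of the Lubotzky--Mozes--Raghunathan machinery.

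For the forward direction, fix a finite generating set $S$ of $\SL_n(\ZZ)$ and a submultiplicative matrix norm $\|\cdot\|$; set $M = \max_{s \in S} \|s\|$. Then $\|g\| \le M^{\ell_G(g)}$ for every $g \in \SL_n(\ZZ)$. If $g$ is a U-element with constant $C$, then $\|g^k\| \le M^{C\log(1+|k|)} = (1+|k|)^{C \log M}$, so the norms $\|g^k\|$ grow only polynomially in $|k|$; the same bound applied to $g^{-1}$ rules out eigenvalues of absolute value different from $1$, since any such eigenvalue would force exponential growth of some matrix entry of $g^k$. As the characteristic polynomial of $g$ is monic with integer coefficients, Kronecker's theorem on algebraic integers on the unit circle implies that all its roots are roots of unity. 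Hence some fixed power $g^N$ has all eigenvalues equal to $1$ and is therefore unipotent.

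For the reverse direction, let $u = g^d$ be unipotent; every power of $g$ factors as $g^r u^m$ with $0 \le r < d$, so it suffices to show $\ell_G(u^m) = O(\log|m|)$ for any unipotent $u \in \SL_n(\ZZ)$. The entries of $u^m$ are polynomials in $m$ of degree less than $n$, hence of size $O(|m|^{n-1})$. Via a Gauss-elimination argument one reduces further to establishing $\ell_G(e_{ij}(a)) = O(\log(1+|a|))$ for every elementary matrix $e_{ij}(a)$ with $i \ne j$. Embedding $\SL_3(\ZZ)$ as a block in $\SL_n(\ZZ)$ (possible since $n \ge 3$), one chooses a hyperbolic $A \in \SL_2(\ZZ)$ with eigenvalue $\lambda > 1$, for instance $A = \begin{pmatrix} 2 & 1 \\ 1 & 1 \end{pmatrix}$, and sets $t_A = \mathrm{diag}(A, 1) \in \SL_3$. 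A direct computation shows that conjugation by $t_A$ sends $e_{13}(x)e_{23}(y)$ to the element corresponding to the vector $A(x,y)^\top$ in the abelian subgroup $\{e_{13}(x)e_{23}(y):(x,y)\in\ZZ^2\}\cong\ZZ^2$. Iterating, the conjugates $t_A^k \, e_{i3}(\pm 1) \, t_A^{-k}$ realise an exponentially growing family of lattice vectors. Combining these with the Steinberg commutator identity $[e_{12}(s), e_{23}(t)] = e_{13}(st)$ in a recursive construction yields the desired $O(\log|a|)$ bound for $e_{1n}(a)$; other root subgroups are treated by conjugation with permutation matrices.

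The principal obstacle is the last amortisation step. Used naively, conjugation by $t_A^k$ costs $O(k)$ letters per level of recursion, giving only a polylogarithmic bound $O((\log|a|)^2)$. Obtaining the sharp $O(\log|a|)$ bound demands the Lubotzky--Mozes--Raghunathan trick of sharing the factors $t_A^{\pm k}$ across many simultaneous conjugates through a recursive ``base-$A$'' decomposition of $\ZZ^2$, in the spirit of a $\beta$-expansion in the real quadratic field $\QQ(\sqrt{\mathrm{tr}(A)^2-4})$. It is precisely this step that forces the hypothesis $n \ge 3$: inside $\SL_2(\ZZ)$ there is no hyperbolic element stabilising the unipotent radical, so the scaling mechanism is unavailable.
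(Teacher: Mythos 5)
Your outline is correct, and for the implication the paper actually proves in detail it is essentially the paper's own argument: the paper cites Lubotzky--Mozes--Raghunathan for the full statement and only establishes the special case that the abelian unipotent subgroup $\{M(\I,v) : v \in \ZZ^{n-1}\}$ is logarithmically distorted, using exactly your mechanism --- the conjugation identity $M(A,0)M(\I,v)M(A,0)^{-1}=M(\I,Av)$ for a hyperbolic $A\in\SL_2(\ZZ)$ (the paper takes $A=\left(\begin{smallmatrix}1&1\\1&0\end{smallmatrix}\right)$) together with a bounded-digit ``base-$A$'' expansion $v\approx\sum_k \beta_k A^k w$ with $\beta_k\in\{0,1,2\}$ and $O(\log(1+\|v\|))$ terms, which is precisely the amortisation you correctly identify as the crux. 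Your forward direction (submultiplicative norm growth plus Kronecker's theorem, then Cayley--Hamilton to get $(g^N-\I)^n=0$) is correct and is a welcome addition, since the paper does not prove that half at all. The one step you should not wave through is the phrase ``via a Gauss-elimination argument one reduces to $\ell_G(e_{ij}(a))=O(\log(1+|a|))$'': for a general unipotent $u\in\SL_n(\ZZ)$ the matrix $u^m$ need not be upper triangular over $\ZZ$, so writing it as a product of a \emph{bounded} number of elementary matrices whose entries are polynomially bounded in the entries of $u^m$ is a genuine input --- it requires either bounded elementary generation with entry control (Carter--Keller; cf.\ Riley's quantitative version cited in the paper's conclusion) or an argument conjugating $\langle u\rangle$ into a $\QQ$-defined maximal unipotent subgroup and controlling denominators. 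The paper avoids this issue entirely by restricting to the concrete abelian subgroup; if you want your sketch to stand as a proof of the full theorem, this reduction is the step to flesh out.
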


For a matrix $A \in \SL_{n-1}(\ZZ)$, and a vector $v \in \ZZ^{n-1}$, write
\[ M(A,v)=  \begin{pmatrix}
A  & v    \\
 0 & 1 \\
\end{pmatrix}  \]

A simple computation shows that 
\begin{equation}\label{conjugate}
M(A,0) M(\I,v) M(A,0)^{-1}= M(\I,Av). 
\end{equation}
where throughout the the article $\I$ denotes the identity matrix of the appropriate size. 

\begin{proposition}\label{}
For every vector $v \in \ZZ^{n-1}$, the length of  the element $M(I,v)$ in $\SL_n(\ZZ)$ is bounded by $C \log (1+ \| v \|)$
for some constant $C$, which only depends on $n$. Moreover, there exists a constant $C_1$ such that a path of length 
$C \log (1+ \| v \|)$ from the identity element to $g$ can be constructed in polynomial time in $ \| v \|$.

\end{proposition}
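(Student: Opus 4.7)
My plan is to produce an explicit short word expressing $M(I,v)$ by combining the abelian structure $M(I,v)M(I,w) = M(I, v+w)$ with the conjugation identity (\ref{conjugate}), organized as a Horner scheme. Since $n \geq 3$, we have $n - 1 \geq 2$, so $\SL_{n-1}(\ZZ)$ contains hyperbolic elements; I fix one, for instance $A = \begin{pmatrix} 2 & 1 \\ 1 & 1 \end{pmatrix}$ placed in the upper-left $2\times 2$ block of the $(n-1)\times(n-1)$ identity, with dominant eigenvalue $\lambda > 1$.

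The central step is to show that every $v \in \ZZ^{n-1}$ admits an expansion
\[ v = w_0 + A w_1 + A^2 w_2 + \cdots + A^k w_k, \]
where the digits $w_j \in \ZZ^{n-1}$ have norm bounded by a constant depending only on $A$, and $k = O(\log(1 + \|v\|))$. I would establish this via a greedy ``division by $A$'' procedure: at each step choose $w_j$ to be a bounded-norm representative of the coset $v + A\ZZ^{n-1}$, subtract, and recurse on $A^{-1}(v - w_j)$. Along the expanding eigendirection the norm contracts by a factor of $\lambda$ at each step, while along the contracting eigendirection the choice of $w_j$ must be tuned to prevent blowup.

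Given such an expansion, the Horner rewriting $v_j = w_j + A v_{j+1}$ yields
\[ M(I, v_j) = M(I, w_j) \cdot M(A, 0)\, M(I, v_{j+1})\, M(A, 0)^{-1}, \]
and each level of the recursion adds only $O(1)$ generators, since $\|w_j\|$ is bounded and $A$ is fixed in $\SL_{n-1}(\ZZ)$. Unwinding the recursion gives $\ell(M(I, v)) \leq C k = O(\log(1 + \|v\|))$. The algorithm evidently runs in polynomial time in $\log \|v\|$, and hence trivially in polynomial time in $\|v\|$, yielding the second assertion of the proposition.

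The main technical obstacle is the bounded-digit expansion itself: since $\det A = 1$, the matrix $A$ is hyperbolic but not fully expanding, so a naive greedy procedure can allow the contracting component of $v$ to grow rather than shrink. I would resolve this by splitting $v$ along the stable and unstable eigenspaces of $A$ (after passing to the splitting field of its characteristic polynomial) and treating each half-line by a separate Baumslag--Solitar-style expansion using $A$ and $A^{-1}$ respectively, then recombining into integral digits. Equivalently, one can employ two hyperbolic matrices whose expanding eigendirections together span $\ZZ^{n-1}$, reducing the task to two independent one-dimensional log-length problems, entirely analogous to the mechanism exhibited in the Baumslag--Solitar example preceding the proposition.
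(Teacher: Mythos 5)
Your proposal is essentially the paper's own argument: both express $M(I,v)$ via a bounded-digit expansion of $v$ in positive and negative powers of a fixed hyperbolic matrix in $\SL_{n-1}(\ZZ)$, handling the expanding and contracting eigendirections separately, and then convert the expansion into a word of length $O(\log(1+\|v\|))$ using the conjugation identity \eqref{conjugate}. The paper instantiates this with the Fibonacci matrix and the explicit integral vectors $e_1,e_2$ after reducing to $n=3$, and it leaves the bounded-digit claim at the same level of sketch as you do; the only adjustment your version needs is that a single $2\times 2$ hyperbolic block acts trivially on the remaining coordinates of $\ZZ^{n-1}$, so the construction must be applied blockwise (or one reduces to the two-dimensional case as the paper does).
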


\begin{proof} We will give a proof of the bound $ \ell_S( M(I, v) ) \le C \log (1+ \| v \|)$. It is a direct corollary of the proof that a path connecting $e$ to $  M(I, v)$ can also be found in polynomial time in $ \| v \|$. It is easy to see that the statement will follow the special case of $n=3$. So, let us assume that $n=3$ and consider
a matrix $M(I, v)$ for some $ v\in \ZZ^2$. Let $e_1, e_2$ denote the standard basis for $\ZZ^2$, and consider the matrix 
\[ A= \begin{pmatrix}
1  & 1    \\
 1 & 0 \\
\end{pmatrix}. \]
One can check that  $A$ has two real eigenvalues $ \lambda_1>1$ and $-1< \lambda_2<0$ given by
\[ \lambda_1= \alpha, \qquad \lambda_2= - \alpha^{-1}, \]
where $ \alpha= \frac{1+ \sqrt{5}}{2}$ and $ \alpha^{-1}=  \frac{ \sqrt{5}-1}{2}.$ Denote the corresponding eigenvectors
by $v_1$ and $v_2$. These are given by
\[ v_1= \begin{pmatrix}  \alpha    \\
 1 \\
\end{pmatrix} , \qquad v_2= \begin{pmatrix} - \alpha^{-1}    \\
 1 \\
\end{pmatrix}. \]

Note that $v_1$ and $v_2$ are linearly independent, and for each integer 
$k \ge 1$ we have 
\[ A^k v_1 = \lambda_1^k v_1, \qquad A^{-k} v_2= \lambda_2^{-k} v_2. \]
It will be desirable to work with vectors with integral coordinates. Hence, we set 
\[ w_1=  \frac{1}{ \sqrt{5}} ( \alpha^{-1}v_1+ \alpha v_2)= \begin{pmatrix} 0   \\
 1 \\
\end{pmatrix} , \qquad w_2= \frac{1}{ \sqrt{5}} (v_1-v_2)= \begin{pmatrix} 1    \\
 0 \\
\end{pmatrix}. \]
Since $ | \lambda_1|>1$ and $| \lambda_2|<1$, we can easily see that for large values of $ k$ we have
\[ \| A^k w_1 \| =  \| \lambda_1^k v_1 + \lambda_2^k v_2 \| \approx  \lambda_1^k \|  w_1 \|. \]
Similarly, 
\[ \| A^{-k} w_2 \| =  \| \lambda_1^{-k} v_1 + \lambda_2^{-k} v_2 \| \approx  \lambda_2^{-k} \| w_2 \|. \]

These, in particular, show that  the vectors $w_1$ and $w_2$ expand exponentially under, respectively, positive and negative powers of $A$. From here one can readily see that every vector in $\ZZ^2$ is at a bounded distance from the set 
\[ \left\{ \sum_{k=-m}^{-1}  \beta_k A^{k} w_2+  \sum_{k=0}^{m}  \beta_k A^k w_1: \beta_i \in \{ 0,1,2 \}, m \ge 1 \right\}. \]
and that $m$ the value of $m$ can be chosen to be of the order $C \log (1+ \| v\|)$ for some constant $C$. The claim will now follow immediately from \eqref{conjugate}. 

\end{proof}

%
%\begin{lemma}\label{}
%For every vector $ u \in \ZZ^n$, there exist a vector $u_0$ with $\| u_0 \| \le 2$ and non-negative integers $m_1< m_2< \dots, m_r$ and $n_1< n_2< \dots< n_s$ such that 
%\[ u = u_0+  \sum_{i=1}^{r} A^{ m_i} w_1+  \sum_{j=1}^{s} A^{-n_i} w_2. \]
%\end{lemma}
%
%\begin{proof} Let us define the Fibonacci sequence by $F_{-1}=0, F_{0}=1$, and $F_{n}= F_{n-1}+ F_{n-2}$ for
%$n \ge 1$. It is not difficult to see that 
%\[ A^k= \begin{pmatrix}
%F_k  & F_{k-1}    \\
% F_{k-1} & F_{k-2} \\
%\end{pmatrix}, \qquad \qquad A^{-k}= (-1)^k \begin{pmatrix}
%F_{k-2}  & -F_{k-1}    \\
% - F_{k-1} & F_{k} \\
%\end{pmatrix}  \] 
%Consider the set of vectors of the form 
%\[ \sum_{i=1}^{r} A^{ m_i} w_1+  \sum_{j=1}^{s} A^{-n_i} w_2, \]
%where $ 0 \le m_1< m_2< \dots< m_r$ and $ 0 \le n_1< n_2< \dots < n_s$. Thus the problem of expressing $u$ in the desired form is equivalent to 
%\[ u=  \sum_{i=1}^{r}  \begin{pmatrix} F_{m_i}    \\
% F_{m_{i}-1} \\
%\end{pmatrix} +   \sum_{j=1}^{s} \begin{pmatrix} (-1)^{j+1} F_{n_{j-1}}    \\
% F_{n_j} \\
%\end{pmatrix}. \]

Let $H$ denote the abelian subgroup of $\SL_n(\ZZ)$ consisting of all matrices of the form $M(\I, v)$ with $ v \in \ZZ^{n-1}$. In the next proposition we will show that $N_G(H)$ has infinite index in $G$

\begin{proposition}\label{}
The normalizer of $H$ consists of matrices of the form 
\[ \begin{pmatrix}
B_{n-1 \times n-1}  & x_{n-1 \times 1}    \\
 0_{1 \times n-1} & \epsilon \\
\end{pmatrix} \]
where $B \in \GL_n(\ZZ)$,  $ \epsilon = \pm 1$ and $\det B= \epsilon$, and has infinite index in $G$. 
\end{proposition}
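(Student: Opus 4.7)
The plan is to identify $N_G(H)$ with the stabilizer in $G = \SL_n(\ZZ)$ of the coordinate sublattice $V = \ZZ e_1 \oplus \cdots \oplus \ZZ e_{n-1}$, and then to exhibit infinitely many sublattices in the $G$-orbit of $V$ to obtain the index assertion.

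The first step is to observe that $V$ is intrinsic to $H$: for $w = \sum w_i e_i \in \ZZ^n$ one has $M(\I, v) w = w + w_n \hat v$, where $\hat v = (v,0)^T \in V$, so $w$ is fixed by every element of $H$ iff $w_n = 0$; hence the common fixed lattice is $\mathrm{Fix}(H) = V$. Because conjugation transports fixed sets, $\mathrm{Fix}(gHg^{-1}) = g(\mathrm{Fix}(H))$, so $g \in N_G(H)$ forces $g(V) = V$. For the converse I would use the rank-one factorization $M(\I,v) - \I = \hat v \cdot e_n^T$ to write
\[ g M(\I,v) g^{-1} = \I + (g\hat v)(e_n^T g^{-1}), \]
and check that when $g(V) = V$ the vector $g\hat v$ lies in $V$ while the last row of $g^{-1}$ is a scalar multiple of $e_n^T$; the outer product on the right is then visibly of the form $M(\I,w)-\I$ with $w \in \ZZ^{n-1}$.

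Next comes the block description. The inclusion $g(V) \subseteq V$ is equivalent to the vanishing of the bottom-left $1 \times (n-1)$ block of $g$, giving the upper-triangular form with blocks $B$, $x$, $0$, $\epsilon$. Computing $g^{-1}$ explicitly and requiring integrality forces $\epsilon \in \{\pm 1\}$ and $B \in \GL_{n-1}(\ZZ)$, after which the constraint $\det g = 1$ yields $\det B = \epsilon$.

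For the index assertion I would send a left coset $gN_G(H)$ to the sublattice $g(V)$; by the above characterization this is a well-defined injection from $G/N_G(H)$ into the set of rank-$(n-1)$ pure sublattices of $\ZZ^n$. Taking the elementary matrices $g_k = \I + k E_{n,1}$ for $k \in \ZZ$ one has $g_k(V) = \mathrm{span}_{\ZZ}(e_1 + k e_n,\, e_2, \ldots, e_{n-1})$, and these sublattices are pairwise distinct since the vector $e_1 + k e_n$ belongs to $g_k(V)$ but not to $g_{k'}(V)$ when $k \ne k'$. The only mildly delicate point in the argument is the intrinsic characterization of $V$ as $\mathrm{Fix}(H)$; once that is in hand, the rest is block-matrix bookkeeping.
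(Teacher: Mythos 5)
Your proof is correct, and it takes a genuinely different route from the paper's; it is also more complete. The paper argues directly from the normalizing condition: writing a candidate element in blocks $B$, $x$, $y^t$, $\epsilon$, it compares the $(n,n)$ entries of $C\,M(\I,a)$ and $M(\I,a')\,C$ to get $y^t a+\epsilon=\epsilon$ for all $a$, hence $y=0$, and then uses $\epsilon\det B=\det C=1$ with both factors integers to conclude $\epsilon=\det B=\pm 1$; the converse inclusion (that every matrix of the stated block form actually normalizes $H$) is not verified there, and the infinite-index claim is simply declared obvious. You instead identify $N_G(H)$ with the stabilizer of the pure sublattice $V=\mathrm{Fix}(H)$, which is intrinsic to $H$ by your computation $M(\I,v)w=w+w_n\hat v$. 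This single idea buys three things at once: the vanishing of the bottom-left block (equivalent to $g(V)=V$), a clean check of the converse via the rank-one identity $gM(\I,v)g^{-1}=\I+(g\hat v)(e_n^Tg^{-1})=M(\I,\epsilon Bv)$ together with the surjectivity of $v\mapsto\epsilon Bv$ coming from $B\in\GL_{n-1}(\ZZ)$, and, via orbit--stabilizer, an honest proof of infinite index from the pairwise distinct lattices $g_k(V)$ with $g_k=\I+kE_{n,1}$. The determinant step is essentially the same in both arguments (your ``compute $g^{-1}$ and require integrality'' is equivalent to the paper's $\epsilon\det B=1$ over $\ZZ$). The only expository wrinkle is that your converse check invokes the block form of $g^{-1}$ before you have formally derived the block description; reordering those two steps makes the argument read linearly.
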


\begin{proof}
Consider a matrix $C$ in the normalizer of $H$ in $\SL_n(\ZZ)$, and assume that it is partitioned into blocks as
\[ A= \begin{pmatrix}
B  & x    \\
 y^t & \epsilon \\
\end{pmatrix} \]
where $B$ is $n -1$ by $n-1$, $x, y \in \ZZ^n$, $ \epsilon \in \ZZ$ and $t$ denotes the transpose.  This implies that for every vector $ a \in \ZZ^n$, there exists a vector $ a' \in \ZZ^n$ such that 
\[ \begin{pmatrix}
B  & x  \\
 y^t & \epsilon \\
\end{pmatrix} \    \begin{pmatrix}
\I  & a    \\
 0 & 1 \\
\end{pmatrix} = \begin{pmatrix}
\I  & a'    \\
 0 & 1 \\
\end{pmatrix} \begin{pmatrix}
B  & x    \\
 y^t & \epsilon \\
\end{pmatrix}. \]
\end{proof}
Comparing the entries $(n,n)$ of the products yields for every $a \in \ZZ^n$:
\[ y^t a+ \epsilon= \epsilon \]
which implies that $ y=0$. From here it follows easily that $ \epsilon \det B =1$. Since $\det B \in \ZZ$ and  $ \epsilon \det B =1$, we must have
$\det B = \epsilon \in \{ \pm 1 \}$. This proves the claim. It is now obvious that this subgroup has infinite index in $\SL_n(\ZZ)$.

\begin{proposition}\label{}
Every conjugate of $A$ is also exponentially distorted in $\SL_n(\ZZ)$. 
\end{proposition}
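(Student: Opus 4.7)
The plan is to reduce the distortion statement for an arbitrary conjugate $gHg^{-1}$ (where $H$ is the abelian subgroup consisting of matrices $M(\I,v)$) to the distortion statement already proved for $H$ itself. The key observation is that conjugation is an isometry on the Cayley graph of $\SL_n(\ZZ)$ up to a bounded additive constant depending on $g$. More precisely, for any fixed finite generating set $S_H$ of $H$, the set $gS_Hg^{-1}$ is a finite generating set for $gHg^{-1}$, and with this choice one has $\ell_{gHg^{-1}}(ghg^{-1})=\ell_H(h)$ for all $h\in H$. Since the definition of exponential distortion is insensitive to the choice of finite generating set up to a multiplicative constant, this choice causes no loss of generality.

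Next I would apply the triangle inequality in the ambient group $G=\SL_n(\ZZ)$. For any $h\in H$,
\[
\ell_G(ghg^{-1})\le 2\ell_G(g)+\ell_G(h).
\]
By the preceding proposition, $\ell_G(h)=\ell_G(M(\I,v))\le C\log(1+\|v\|)$ for some constant $C=C(n)$. Moreover, since $H$ is a finitely generated free abelian group and the word length $\ell_H(M(\I,v))$ is comparable to $\|v\|$, one has $\log(1+\|v\|)\le C'\log(1+\ell_H(h))$ for some constant $C'$. Combining these bounds yields
\[
\ell_G(ghg^{-1})\le 2\ell_G(g)+CC'\log\bigl(1+\ell_H(h)\bigr)= 2\ell_G(g)+CC'\log\bigl(1+\ell_{gHg^{-1}}(ghg^{-1})\bigr).
\]

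Finally, to absorb the additive constant $2\ell_G(g)$ (which is fixed once $g$ is fixed), I would split into two regimes: either $\ell_{gHg^{-1}}(ghg^{-1})$ is bounded (in which case $\ell_G(ghg^{-1})$ is also bounded, trivially giving the desired inequality by enlarging the multiplicative constant), or $\ell_{gHg^{-1}}(ghg^{-1})$ is large, in which case $2\ell_G(g)$ is dominated by the logarithmic term after multiplying the constant by a suitable factor. This yields a constant $C''=C''(g,n)$ such that
\[
\ell_G(ghg^{-1})\le C''\log\bigl(1+\ell_{gHg^{-1}}(ghg^{-1})\bigr),
\]
which is the definition of exponential distortion.

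I do not anticipate a serious obstacle here; the argument is essentially a formal consequence of the distortion already established for $H$ together with the fact that conjugation by a fixed element is a bi-Lipschitz map on $G$ with its word metric. The only mildly delicate point is making sure the additive constant coming from $\ell_G(g)$ is correctly absorbed into the multiplicative constant of the logarithmic bound, which is handled by the case split above.
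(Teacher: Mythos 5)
Your proposal is correct and follows essentially the same route as the paper: endow the conjugate subgroup with the conjugated generating set so that $\ell_{gHg^{-1}}(ghg^{-1})=\ell_H(h)$, and then transfer the logarithmic bound from $H$ to its conjugate using the fact that conjugation by a fixed $g$ changes $\ell_G$ by at most the additive constant $2\ell_G(g)$. In fact you are more careful than the paper's own proof, which passes from $\ell_G(a)\le C\log(1+\ell_H(a))$ to the corresponding bound for $\ell_G(g^{-1}ag)$ without explicitly absorbing that additive constant; your case split supplies the missing detail.
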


\begin{proof} Let $S_A$ denote a generating set for $A$, and for $g \in G$, 
set $S_g= \{g a g^{-1} : a \in S_A \}$.
Consider  an element $x \in H^g:=g^{-1}Ag $, and write $x= g^{-1}ag$ for some $a \in H$. Since $H$ is exponentially distorted in $\SL_n(\ZZ)$, we have
\[ \ell_G(a) \le C \log ( 1+ \ell_H(a)). \]
This implies that 
\[ \ell_G(x) \le C \log ( 1+ \ell_{H^g}(x ) ).  \]
where $\ell_{H^g}$ is the word metric on $H^g$ defined with respect to the generating set $S_{g}$. 
This proves the claim.
\end{proof}

From here we can see that any one of the conjugates of $H$ can be used instead of $H$. The versatility gained in this way allows us
to replace $H$ by its conjugates, which are all isomorphic to $H$, but are harder to detect.

\section{Non-arithmetic linear groups with U-elements}\label{universal}
In this section we will construct a new class of linear groups with a wealth of U-elements that have the desired properties for the encryption scheme we have before. Let $R_m:= \ZZ[x_1, \dots, x_m]$ denote the ring of polynomials with integer coefficients in variables $x_1, \dots, x_m$.  Denote by $\SL_n(\ZZ[x_1, \dots, x_m])$ the group consisting of $n \times n$ matrices of determinant $1$ with entries from $R_m$. Note that if $R$ is any $\ZZ$-ring generated by $m$ elements, then $R$ can be viewed as a quotient of $R_m$. Note that any ring homomorphism $ \phi: R_m \to R$ induces a group homomorphism from $\SL_n(R_m)$ into $\SL_n(R)$. It is important to remark that this homomorphism need not be surjective. For simplicity of notation, we will prove Theorem \ref{zxb} in the special case of $m=1$. It can be easily seen that the same proof works for general $m \ge 1$. More precisely, we will show
 
\begin{theorem}\label{zx}
For $n \ge 3$, an element $u$ of the group $G=\SL_n(\ZZ[x])$ is a U-element iff $u$ is virtually unipotent.
\end{theorem}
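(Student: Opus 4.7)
The plan is to split into the two directions and use different tools for each. For \emph{virtually unipotent $\Rightarrow$ U-element}, I will reduce to $u$ unipotent, binomially expand $u^q$, and compress each elementary factor using a Chevalley commutator identity that is available since $n \ge 3$. For the converse, I will pull back to $\SL_n(\ZZ)$ via integer specializations of $x$, apply the Lubotzky--Mozes--Raghunathan classification there, and then recover global virtual unipotence of $u$ from a finiteness argument on characteristic polynomials.

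\medskip
\emph{U-element $\Rightarrow$ virtually unipotent.} For each integer $a$, the substitution $x \mapsto a$ gives a ring homomorphism $\ZZ[x] \to \ZZ$ and an induced group homomorphism $\Phi_a : \SL_n(\ZZ[x]) \to \SL_n(\ZZ)$. Because $\Phi_a$ maps a fixed finite generating set of the source to elements of bounded length in the target, it is Lipschitz up to a constant $C_a$ for the word metrics, so $\Phi_a(u)$ is either torsion or itself a U-element, hence virtually unipotent in $\SL_n(\ZZ)$ by LMR. Therefore the characteristic polynomial $\chi_u(T) = T^n + c_{n-1}(x) T^{n-1} + \cdots + c_0(x) \in \ZZ[x][T]$ has the property that every specialization $\chi_u|_{x=a}$ lies in the finite set of monic integer polynomials of degree $n$ all of whose roots are roots of unity (Kronecker). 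Each coefficient $c_i(x) \in \ZZ[x]$ thus takes only finitely many integer values on the infinite set $\ZZ$, and so must be a constant, giving $\chi_u \in \ZZ[T]$ with all roots roots of unity of some common order $N$. Then $\chi_{u^N}(T) = (T-1)^n$, and Cayley--Hamilton inside $M_n(\ZZ[x])$ yields $(u^N - \I)^n = 0$, so $u^N$ is unipotent.

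\medskip
\emph{Virtually unipotent $\Rightarrow$ U-element.} After passing to a power I may assume $u = \I + M$ with $M \in M_n(\ZZ[x])$ nilpotent, so
\[
u^q = \sum_{k=0}^{n-1} \binom{q}{k} M^k
\]
has entries polynomial in $q$ of bounded degree with coefficients in $\ZZ[x]$ independent of $q$. The central sub-lemma is that, for any fixed $p \in \ZZ[x]$, any $d \ge 0$, and any $i \ne j$, one has $\ell_G(E_{ij}(p(x) q^d)) = O(\log(1 + |q|))$. I prove this by induction on $d$: since $n \ge 3$ there is a third index $k$, and the identity $E_{ij}(p(x) q^d) = [E_{ik}(q),\, E_{kj}(p(x) q^{d-1})]$ reduces the claim to the logarithmic length of $E_{ik}(q) \in \SL_n(\ZZ)$ (supplied by the earlier proposition) together with the inductive hypothesis applied to the second factor; the base case $d = 0$ is a single fixed element. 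It then remains to express $u^q$ as a product of a \emph{bounded} number of such elementary matrices, so that the sub-lemma bounds the total length by $O(\log q)$.

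\medskip
The main obstacle I expect is exactly this last packaging step. Suslin's theorem gives $\SL_n(\ZZ[x]) = E_n(\ZZ[x])$ for $n \ge 3$, so an elementary decomposition of $u^q$ exists, but it does not a priori bound uniformly in $q$ either the number of factors or the degree and coefficient size of their entries. Two viable routes are (a) to conjugate $u$, after passing to a further power if necessary, into the upper-triangular unipotent group $U_n(\ZZ[x])$ and then use the lower central series of $U_n$ together with the Chevalley commutator relations to write $u^q$ explicitly as $O(n^2)$ elementary factors with polynomial-in-$q$ entries of bounded degree; or (b) to carry out a direct Gauss elimination on $u^q$ inside $\SL_n(\ZZ[x,q])$, tracking the $q$-dependence of each pivot by hand. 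Either route feeds into the sub-lemma to yield the desired bound $\ell_G(u^q) = O(\log(1 + |q|))$, completing the proof.
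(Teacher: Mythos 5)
Your argument for ``U-element $\Rightarrow$ virtually unipotent'' is correct and takes a genuinely different (and arguably cleaner) route than the paper. Both proofs specialize $x$ at every integer and invoke Lubotzky--Mozes--Raghunathan, but the paper then needs a uniformity lemma --- there is an $r=r(n)$ with $(g^r-\I)^n=0$ for \emph{every} virtually unipotent $g\in\SL_n(\ZZ)$ --- so that the entries of $(u^r-\I)^n$ become polynomials vanishing at all integers, hence zero. Your detour through Kronecker (the specialized characteristic polynomials lie in a finite set, so each coefficient $c_i(x)$ takes finitely many values on $\ZZ$ and is constant, so $\chi_u\in\ZZ[T]$ has only roots of unity as roots) gets the same conclusion without needing that uniformity, since the constancy of $\chi_u$ supplies a single $N$ for free.

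The forward direction, however, has exactly the gap you flag, and that gap is the crux of the theorem; neither of your two proposed repairs is the right tool. The missing idea is to apply Suslin's theorem \emph{once}, to the single matrix $u^{dq}$ viewed as an element of $\SL_n(\ZZ[x,q])$ with $q$ treated as a second formal variable (with $d$ chosen, e.g.\ $d=(n-1)!$, so that the binomial coefficients ${dq \choose k}$ for $k\le n-1$ are integer polynomials in $q$; the residues $u^{dq+j}$, $0\le j<d$, are then handled by multiplying by the fixed elements $u^j$). Since $\SL_n(\ZZ[x_1,x_2])$ is elementarily generated for $n\ge 3$, this yields one decomposition of $u^{dq}$ into a \emph{fixed} finite product of elementary matrices $E_{ij}(t)$ with $t\in\ZZ[x,q]$; specializing $q$ to each integer then gives, uniformly in $q$, a bounded number of factors whose entries are polynomials in $q$ of bounded degree with bounded coefficients --- precisely the input your (correct) commutator sub-lemma needs. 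By contrast, your route (a) is problematic because a unipotent matrix over $\ZZ[x]$ need not be conjugate \emph{over $\ZZ[x]$} to an upper-triangular one ($\ZZ[x]$ is not a PID, and the kernels of the powers of $u-\I$ need not be free direct summands of $\ZZ[x]^n$), and your route (b) amounts to reproving a case of Suslin's theorem by hand: $\ZZ[x,q]$ is not Euclidean, so naive Gauss elimination by elementary row operations has no a priori termination or size control. With the two-variable application of Suslin in place, the rest of your argument closes the proof.
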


Before starting the proof we will need a lemma about virtually unipotent matrices in $\SL_n(\ZZ)$:

\begin{lemma}\label{virtual}
Suppose $n \ge 2$, and $g \in \SL_n(\ZZ)$ is a virtually unipotent matrix. Then there exists $r=r(n)$ only depending on $n$ such that 
$(g^r-\I)^n=0$.
\end{lemma}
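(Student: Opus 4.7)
The plan is to argue that virtual unipotence forces the eigenvalues of $g$ to be roots of unity of bounded order, and then to use Cayley--Hamilton to conclude.

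First, I would observe that if $g^k$ is unipotent for some $k \ge 1$, then all eigenvalues of $g^k$ are $1$, so every eigenvalue of $g$ is a root of unity (of some order dividing a multiple of $k$). Since $g \in \SL_n(\ZZ)$, the characteristic polynomial of $g$ is a monic polynomial of degree $n$ with integer coefficients. Consequently, every eigenvalue $\lambda$ of $g$ is an algebraic integer whose minimal polynomial over $\QQ$ has degree at most $n$.

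Next, I would combine these two facts: if $\lambda$ is a primitive $d$-th root of unity, then the degree of its minimal polynomial is $\varphi(d)$, so the possible orders $d$ of eigenvalues of any $g$ arising this way satisfy $\varphi(d) \le n$. Since $\varphi(d) \to \infty$ as $d \to \infty$, the set
\[ D_n := \{ d \ge 1 : \varphi(d) \le n \} \]
is finite, and depends only on $n$. Setting
\[ r = r(n) := \mathrm{lcm}(D_n), \]
we have $\lambda^r = 1$ for every eigenvalue $\lambda$ of any virtually unipotent $g \in \SL_n(\ZZ)$.

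Finally, since the eigenvalues of $g^r$ are the $r$-th powers of the eigenvalues of $g$, all eigenvalues of $g^r$ equal $1$. The characteristic polynomial of $g^r$ therefore is $(t-1)^n$, and Cayley--Hamilton yields $(g^r - \I)^n = 0$, as required. The argument is essentially elementary; the only subtle point is ensuring that $r$ depends only on $n$ and not on $g$, which is exactly what the finiteness of $D_n$ provides. I do not anticipate any real obstacle beyond checking these standard facts.
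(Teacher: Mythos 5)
Your proof is correct and follows essentially the same route as the paper: eigenvalues of a virtually unipotent $g$ are roots of unity whose minimal polynomials have degree at most $n$, the finiteness of $\{d : \varphi(d)\le n\}$ gives a uniform exponent $r(n)$, and then $(g^r-\I)^n=0$. Your write-up is in fact slightly more explicit than the paper's (naming $r$ as the lcm of $D_n$ and invoking Cayley--Hamilton for the final step), but the argument is the same.
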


\begin{proof}
Suppose $g \in \SL_n(\ZZ)$ 
is a virtually unipotent matrix. Since $g^m$ is unipotent for some $m \ge 1$, we know that every eigenvalue of $g$ is a root of unity.
On the other hand, the characteristic polynomial of $g$ has degree at most $n$, implying that any eigenvalue $ \lambda$ of $g$ is a root 
of polynomial equation of degree at most $n$ with integer coefficient. In view of the fact that that if degree of a $k$-th root of unity is $\phi(k)$, 
and the simple fact that $\phi(k) \to \infty$ as $k \to \infty$, it follows that only finitely many roots of unity can be potentially eigenvalues of $g$. In particular, there exists an integer $r \ge 1$ depending only on $n$ and not on $g$ such that $ (g^r - \I)^n=0$. 
\end{proof}

\begin{proof}[Proof of Theorem \ref{zx}] Let us first show that if $u$ is a U-element then $u$ is virtually unipotent.  
For each integer $k \in \ZZ$, let ${\mathrm{ev}}_k: \ZZ[x] \to \ZZ$ denote the evaluation homomorphism at point $k$, mapping $f(z) \in \ZZ[x]$ to
$f(k)$. Denote by $\pi_k: \SL_n(\ZZ[x]) \to \SL_n(\ZZ)$ the group homomorphism which is induced by ${\mathrm{ev}}_k$.  Since $u$ is a U-element, it follows from Lemma \ref{quotient} that $\pi_k(u)$ is also a U-element, and hence virtually unipotent. From the above remark it follows that $ ( \phi_k(u)^r- \I)^n=0$. This means that for each entry of 
the matrix $(u^r- \I)^n$ (which is a polynomial in $x$) when evaluated at an arbitrary integer is zero. This implies that 
$(u^r- \I)^n=0$, which shows that $u$ is virtually unipotent.

Let us now prove the converse statement. Let us denote the elementary matrix with entry $t$ in row $i$ and column $j$ by $E_{ij}(t)$. Fix a generating set $S$ for $\SL_n(\ZZ[x])$ consisting of all elementary matrices of the form $E_{ij}(1)$ and $E_{ij}(x)$. Note that using the commutator relations all the elementary 
matrices $E_{ij}(x^r)$ for $ r \ge 2$ can also be generated.  Recall that $u$ is unipotent when $u-\I$ is nilpotent. It is well-knowns that $F$ is a field and $w \in \GL_n(F)$ is a nilpotent matrix, then $ w^n=0$. Writing $u=\I+w$, from $w^n=0$, we have for every $k \ge n$:
\[ u^q= (\I+w)^q=  \sum_{k=0}^{n-1} {q \choose k}  w^k. \]
In particular it follows that each entry of $u^q$ is a polynomial of degree at most $n-1$ in the polynomial ring $\QQ[x][q]$. Let $d$ be a common denominator of the fractions appearing the entries of $u^q$. 
At this point, we are going to use the following special case of a theorem of Suslin proven in \cite{Suslin}:

\begin{theorem}[Suslin]
For $n \ge 3$, and $m \ge 1$ the group $\SL_n(\ZZ[x_1, \dots, x_m])$ is generated by elementary matrices. 
\end{theorem}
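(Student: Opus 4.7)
The plan is to prove Suslin's theorem by induction on the number of variables $m$, reducing the statement to a $K_1$-stability assertion. The key claim to establish is that for any commutative ring $R$ and $n \ge 3$, the evaluation map at $x=0$ induces a bijection
\[ \SL_n(R[x])/E_n(R[x]) \longrightarrow \SL_n(R)/E_n(R), \]
where $E_n(\cdot)$ denotes the subgroup generated by elementary matrices. Granting this, set $R = \ZZ[x_1, \dots, x_{m-1}]$ and apply the inductive hypothesis $\SL_n(R) = E_n(R)$ to conclude $\SL_n(R[x_m]) = E_n(R[x_m])$. The base case $m=0$ is the classical fact that $\SL_n(\ZZ)$ is generated by elementary matrices, proven by the Euclidean algorithm: any matrix can be reduced to the identity by elementary row and column operations in $\ZZ$.

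The first major step is Suslin's normality theorem: for $n \ge 3$ and any commutative ring $R$, the elementary subgroup $E_n(R)$ is normal in $\SL_n(R)$. This is proven using the commutator identities $[E_{ij}(a), E_{jk}(b)] = E_{ik}(ab)$ for distinct $i,j,k$, which force any conjugate of an elementary matrix to decompose as a product of elementaries once enough room (three distinct indices) is available. This step genuinely requires $n \ge 3$ and fails for $n=2$.

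The second major step is the local-global principle: a matrix $\alpha(x) \in \SL_n(R[x])$ satisfying $\alpha(0) = \I$ belongs to $E_n(R[x])$ if and only if its localization $\alpha_{\mathfrak{m}}(x)$ belongs to $E_n(R_{\mathfrak{m}}[x])$ for every maximal ideal $\mathfrak{m}$ of $R$. The proof proceeds by forming the matrix $\beta(x,y) = \alpha(x+y)\alpha(y)^{-1}$ over $R[x,y]$, showing it is elementary locally, and using a patching argument with ideals of denominators to globalize the factorization; normality of $E_n$ is used repeatedly to collect terms.

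The third major step handles the local case: when $R$ is local, every $\alpha(x) \in \SL_n(R[x])$ with $\alpha(0)=\I$ is a product of elementary matrices. Here one uses a Noether-normalization-style monic polynomial lemma: a unimodular row over $R[x]$ can, after elementary operations, be brought to a form where one entry is monic in $x$; once an entry is monic, division with remainder allows one to elementarily reduce the remaining entries modulo the monic one, and an induction on degree finishes the reduction to $(1,0,\dots,0)$.

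The main obstacle, and the deepest contribution of Suslin, is the combination of the local-global principle with the monic polynomial trick; individually each step is technical but digestible, yet their interplay—in particular, controlling the denominators introduced by localization while preserving elementary factorizations across patches—is where the argument becomes delicate. Once these are in place, assembling the induction on $m$ and appealing to the $\ZZ$-base case is routine.
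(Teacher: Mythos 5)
First, a remark on scope: the paper does not prove this statement at all --- it is imported as a black-box citation to Suslin's 1977 paper and used only as an ingredient in the proof of Theorem \ref{zx}. So your sketch is not competing with an argument in the text; it is an attempt to reconstruct Suslin's own proof. You have correctly identified the named ingredients of that proof (normality of $E_n$ in $\SL_n$ for $n \ge 3$, the Quillen--Suslin local-global principle for $E_n$, and the monic-polynomial/Horrocks reduction in the local case), and your base case $\SL_n(\ZZ) = E_n(\ZZ)$ is fine.

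The gap is in how you glue these together. Your ``key claim'' --- that for \emph{any} commutative ring $R$ and $n \ge 3$, evaluation at $x=0$ induces a bijection $\SL_n(R[x])/E_n(R[x]) \to \SL_n(R)/E_n(R)$ --- is a form of homotopy invariance of unstable $K_1$ and is not true in that generality: it requires at least regularity of $R$ (already the stable statement $K_1(R[x]) = K_1(R)$ fails for $R = k[t]/(t^2)$), and correspondingly your ``local case'' is false for non-regular local rings. More damagingly for your induction, even after restricting to the regular rings that actually occur, setting $R = \ZZ[x_1,\dots,x_{m-1}]$ makes the Krull dimension of the base grow with $m$: the localizations $R_{\mathfrak{m}}$ are regular local rings of dimension up to $m$, while the Horrocks/monic-polynomial machinery at a fixed $n$ only controls bases of dimension at most $n-2$. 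So the one-variable statement you invoke at step $m$ is strictly stronger than what steps $1,\dots,m-1$ deliver, and the induction does not close at $n \ge 3$. Suslin's actual argument avoids this by never enlarging the base: one applies the local-global principle over $\mathrm{Spec}(\ZZ)$ itself (a one-dimensional ring, so $n \ge 3$ suffices), reducing to $\ZZ_{(p)}[x_1,\dots,x_m]$ with $\ZZ_{(p)}$ a fixed discrete valuation ring, and then the induction on the \emph{number of variables} is carried out over that fixed low-dimensional local base, using a Nagata change of variables to make an entry monic in the last variable and Horrocks' theorem to descend from $m$ variables to $m-1$. If you reorganize the induction that way --- localize at primes of $\ZZ$, not at primes of $\ZZ[x_1,\dots,x_{m-1}]$ --- your three ingredients do assemble into a correct proof.
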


We can now view the matrix $u^{dq}$ for $q \ge 1$ as an element of the group $\SL_n(\ZZ[q,x])$. Using Suslin's Theorem, $u^{dq}$ can be expressed as a product of elementary matrices $\I+ E_{ij}( t)$, with $t \in \ZZ[x,q]$.
Write $t= \sum_{l=0}^{d} a_l q^l$, with $a_l \in \ZZ[x]$. Choose an index $k$ different from $i,j$ and note that 
\[ E_{ij}( t)= \prod_{l=0}^{d} E_{ij}( a_l q^l)=  \prod_{l=0}^{d} [E_{ik}( a_l), E_{kj}(q^l)]. \]
This implies that 
\[ \ell_S( E_{ij}(t) ) \le 2 \sum_{l=0}^{d} \ell_S( E_{ik}(a_l)) +\ell_S(E_{kj}(q^l)) \le C \log q+ O(1). \] 
Since $u^q$ is a product of a bounded (independent of $q$) of elementary matrices of the above form, it has a length
bounded by $O( \log q)$. It follows that $u$ is a U-element. 
\end{proof}

Many other examples can be built out of this example using the following lemma:

\begin{lemma}\label{quotient}
Let $G$ be a group generated by a set $S$. Suppose $ g \in G$ is such that the cyclic subgroup generated by $g$ 
is exponentially distorted in $G$. Let $ \phi: G \to H$ be a group homomorphism. If $\phi(g)$ has infinite order, then it is exponentially distorted in $H$. 
\end{lemma}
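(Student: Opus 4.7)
The plan is simply to transport a short word expressing $g^n$ in $G$ across $\phi$ to obtain a short word expressing $\phi(g)^n$ in $H$. First I would fix the finite generating set $S$ of $G$ witnessing the distortion, so that $\ell_{G,S}(g^n) \le C \log(1+|n|)$ for all $n \ge 1$ and some $C>0$, and fix a generating set $T$ of $H$. By enlarging $T$ if necessary---which, as the paper has already observed, only affects the distortion constant---I may assume $\phi(S) \subseteq T$; equivalently, one may just work in the subgroup $\phi(G) \le H$ generated by $\phi(S)$.

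Next, given $n \ge 1$, write $g^n = s_1^{e_1} \cdots s_k^{e_k}$ with $s_i \in S$, $e_i = \pm 1$, and $k = \ell_{G,S}(g^n) \le C \log(1+|n|)$. Applying $\phi$ termwise gives
\[ \phi(g)^n = \phi(g^n) = \phi(s_1)^{e_1} \cdots \phi(s_k)^{e_k}, \]
which expresses $\phi(g)^n$ as a word of length $k$ in the alphabet $\phi(S) \subseteq T$. Hence $\ell_{H,T}(\phi(g)^n) \le C \log(1+|n|)$.

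Finally, since $\phi(g)$ is assumed to have infinite order, the cyclic subgroup $\langle \phi(g) \rangle$ is infinite and its intrinsic word length (with respect to the generator $\phi(g)$) is exactly $\ell_{\langle \phi(g) \rangle}(\phi(g)^n) = |n|$. Combining this with the preceding bound gives
\[ \ell_{H,T}(\phi(g)^n) \le C \log\bigl(1+ \ell_{\langle \phi(g) \rangle}(\phi(g)^n)\bigr), \]
which is precisely the statement that $\langle \phi(g) \rangle$ is exponentially distorted in $H$.

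There is essentially no obstacle here; the argument is a one-line transport of words across a homomorphism. The only point worth emphasizing is that the infinite-order hypothesis on $\phi(g)$ is genuinely needed: without it, $\langle \phi(g) \rangle$ would be finite and exponential distortion would be vacuous or fail on trivial grounds. This lemma is precisely what is used earlier in the proof of Theorem \ref{zx} to pass from the distortion hypothesis on $u \in \SL_n(\ZZ[x])$ to distortion of each evaluation $\pi_k(u) \in \SL_n(\ZZ)$, where the Lubotzky--Mozes--Raghunathan theorem can then be invoked.
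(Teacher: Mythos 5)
Your proof is correct and follows essentially the same route as the paper: reduce to the image $\phi(G)$ (the paper says ``assume $\phi$ is surjective''), note that $\ell_{H,\phi(S)}(\phi(a)) \le \ell_{G,S}(a)$ by pushing words through $\phi$, and use the infinite-order hypothesis to identify $\ell_{\langle \phi(g)\rangle}(\phi(g)^n)$ with $|n|$. Your write-up is in fact slightly more careful than the paper's, which leaves the last two steps implicit.
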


\begin{proof}
Without loss of generality we can assume that $\phi$ is surjective. Let $S$ be a finite generating set for $G$ and 
$ \overline{S} = \phi(S)$ be the image of $G$ under $\phi$. One can readily see that $ \overline{ S}$ is a generating 
set for $H= \phi(G)$, and that for every $g \in G$, we have
\[ \ell_H( \phi(g) ) \le \ell_G( g). \]
From here it follows immediately that if $g$ is a U-element, then so is $\phi(g)$. 
\end{proof}

\begin{example} Using Theorem \ref{zx} and Lemma \ref{quotient} one can construct a large number of other matrix groups with U-elements. Let $I$ be an ideal of $\ZZ[x]$. For instance, we can choose a polynomial $f(x) \in \ZZ[x]$, and let $I$ be the ideal generated by $f(x)$. Set $R= \ZZ[x]/I$. It is easy to see from Theorem \ref{zx} and Lemma \ref{quotient} that 
the image of every unipotent matrix in $\SL_n(\ZZ[x]/I)$ is a U-element. 

Let us consider a special case of interest here. Let $I_k$ be the ideal generated by $f(x)=x^{k+1}$ where $k \ge 1$. 
Consider the group $G_{n,k}=\SL_n(\ZZ[x]/( x^{k+1}))$. One can easily check that  $G_{n,k}$ is a finitely generated group, with a generating set consisting of all elementary matrices of the form $E_{ij}(x^r)$ with $ 0 \le r \le k$. Let $\pi_k$ denote the reduction map from $R/I_k$ to $\ZZ$ defined by $f(x) \mapsto f(0)$. It is easily seen that $\pi_k$ induces a surjection from $\SL_n(\ZZ[x]/I_k)$ to $\SL_n(\ZZ)$, yielding the following exact sequence:
\[ 0 \mapsto N_{n,k} \to\SL_n(\ZZ[x]/I_k) \to \SL_n(\ZZ) \to 0 \]
where $N_{n,k}$ denotes the kernel of the reduction map. 

\begin{proposition}\label{kernel}
The kernel $N_{n,k}$ is a nilpotent group of class $k$. 
\end{proposition}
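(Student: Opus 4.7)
The plan is to equip $N_{n,k}$ with the natural descending filtration coming from the powers of the maximal ideal $J := (x) \subset R := \ZZ[x]/(x^{k+1})$, verify that this filtration is compatible with commutators, and thereby pin down the nilpotency class both from above and from below. For $1 \le i \le k+1$ we set
\[ N^{(i)} := \bigl\{ g \in \SL_n(R) : g \equiv \I \pmod{J^i} \bigr\}, \]
producing a chain $N_{n,k} = N^{(1)} \supseteq N^{(2)} \supseteq \dots \supseteq N^{(k)} \supseteq N^{(k+1)} = \{\I\}$, where the last equality uses $J^{k+1} = 0$.

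The central ingredient will be the commutator inclusion $[N^{(i)}, N^{(j)}] \subseteq N^{(i+j)}$. To prove it, one writes $g = \I + a$ with $a \in M_n(J^i)$ and $h = \I + b$ with $b \in M_n(J^j)$, observes that $gh - hg = ab - ba$ has entries in $J^{i+j}$, and rearranges to obtain
\[ [g,h] = gh g^{-1} h^{-1} = \I + (ab - ba)\, g^{-1} h^{-1}, \]
whose correction still has entries in $J^{i+j}$ because right multiplication by matrices in $M_n(R)$ preserves that ideal. A routine induction on the lower central series then yields $\gamma_i(N_{n,k}) \subseteq N^{(i)}$, and specializing to $i = k+1$ forces $\gamma_{k+1}(N_{n,k}) = \{\I\}$, so the nilpotency class is at most $k$.

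For the matching lower bound I will exhibit an explicit nontrivial element of $\gamma_k(N_{n,k})$ using the Steinberg-type identity $[E_{pq}(\alpha), E_{qr}(\beta)] = E_{pr}(\alpha\beta)$, valid for any three distinct indices $p, q, r$ (which exist since $n \ge 3$). Starting from $[E_{12}(x), E_{23}(x)] = E_{13}(x^2) \in \gamma_2$ and then alternately bracketing the result against $E_{32}(x)$ and $E_{23}(x)$ so as to cycle the non-$2$ index between $1$ and $3$, after a total of $k-1$ nested commutators one obtains an element of the form $E_{pr}(x^k)$ with $p, r \in \{1,2,3\}$; since $x^k \ne 0$ in $R$, this is a nontrivial element of $\gamma_k(N_{n,k})$, so the class is exactly $k$.

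The upper bound is a clean ideal-theoretic computation that would work for any congruence kernel over a local ring with nilpotent maximal ideal, and I do not expect any real difficulty there. The step that requires genuine care is the lower bound: one must arrange the chain of Steinberg commutators so that each one actually lands in the strictly smaller layer $N^{(i+1)}$ rather than being killed by a vanishing Steinberg relation, which forces the second entry to cycle inside a fixed triple such as $\{1,2,3\}$ instead of being chosen arbitrarily.
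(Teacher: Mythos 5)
Your proof is correct, and for the upper bound it follows the same route as the paper: filter the congruence kernel by the powers $N^{(i)}$ of the ideal $(x)$, check that $[N^{(i)},N^{(j)}]\subseteq N^{(i+j)}$ by a direct matrix computation, and conclude $\gamma_{k+1}(N_{n,k})=\{\I\}$. The paper carries out exactly this commutator estimate (writing $A=\I+xA'$, $B=\I+xB'$ and working modulo $I^{p+q}$) and then stops with ``the claim follows from here,'' which only yields class \emph{at most} $k$. Your proposal goes further and actually establishes the stated equality: the chain of Steinberg commutators $[E_{12}(x),E_{23}(x)]=E_{13}(x^2)$, $[E_{13}(x^2),E_{32}(x)]=E_{12}(x^3)$, and so on, produces a nontrivial element $E_{pr}(x^k)\in\gamma_k(N_{n,k})$ because $x^k\neq 0$ in $\ZZ[x]/(x^{k+1})$, so the class is exactly $k$. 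This lower bound is precisely the part the paper omits, and your observation that one must cycle the outer index within a fixed triple $\{1,2,3\}$ (using $n\ge 3$) to keep each Steinberg bracket nondegenerate is the genuine content of that step. The only cosmetic point worth noting is that your identity $[g,h]=\I+(ab-ba)g^{-1}h^{-1}$ is a cleaner way to package the same estimate the paper does by expanding $AB$ and $BA$ separately.
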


\begin{proof}
It is easy to see that $N_{n,k}$ consists of those matrices $A \in G_{n,k}$ such that every entry of  $A-\I$ is divisible by $x$. Suppose $A,B \in N_{n,k}$. Then 
$A= \I+ x A'$ and $B= \I+ x B'$, where $A',B'$ are matrices with integer coefficients. Note that 
\[ AB= \I + x(A'+ B')+ xA'B', \qquad BA= \I+ x(A'+B') + x B'A'. \]
In particular, we have $AB \equiv BA \pmod{I^2}$, which shows that $[A,B] \equiv \I \pmod{I^2}.$ By a similar argument one can show that if $A \equiv \I \pmod{I^p}$ and $B \equiv \I \pmod{I^q}$, then $[A,B] \pmod \I \pmod{I^{p+q} }$. The claim follows from here. 
\end{proof}
\end{example}

\section{KPA security} In this section we will turn to the question of security of the scheme. 
It is well known that the main security property of a  symmetric-key encryption scheme is KPA security. This poses the question of whether the adversary will be able to recover the secret generating set if she accumulates several elements of a known geodesic length with respect to an unknown generating set. In this direction we will prove the following

\begin{proposition}\label{}
Let $\Gamma$ be an infinite finitely generated subgroup of $\GL_n(\CC)$. Given $g_1, \dots, g_m \in \Gamma$, and positive integers $k_1, \dots, k_m$, suppose   that there exists a generating set $S$ for $\Gamma$ such that $ \ell_{\Gamma, S}(g_i)=k_i$ for all $1 \le i \le m$. Then there are infinitely many generating sets $S_j$, $j =1,2, \dots$ such that
\begin{enumerate}
\item $$ \ell_{\Gamma, S_j}(g_i)=k_i$$ for all $ 1 \le i \le m$, and $j \ge 1$.
\item The associated metrics $\ell_{\Gamma, S_j}$ on $\Gamma$ are pairwise distinct. 
\end{enumerate}
\end{proposition}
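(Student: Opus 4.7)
The plan is to enlarge $S$ by a single new element: for a suitably chosen $w \in \Gamma$ set $S_w := S \cup \{w\}$. The inclusion $S \subset S_w$ immediately gives $\ell_{\Gamma, S_w}(g_i) \le k_i$, so the task reduces to arranging equality for every $i$ simultaneously, and then varying $w$ over an infinite family that yields pairwise distinct metrics. Suppose $\ell_{\Gamma, S_w}(g_i) < k_i$; then any freely reduced $S_w^{\pm 1}$-word of length $< k_i$ representing $g_i$ must contain at least one letter $w^{\pm 1}$, for otherwise it would contradict $\ell_{\Gamma, S}(g_i) = k_i$ directly. Grouping the $S^{\pm 1}$-letters between consecutive occurrences of $w^{\pm 1}$ yields a ``pattern'' relation
\[ g_i = u_0\, w^{\epsilon_1} u_1\, w^{\epsilon_2} u_2 \cdots w^{\epsilon_p} u_p, \]
with $p \ge 1$, $\epsilon_r = \pm 1$, each $u_r$ a word in $S^{\pm 1}$, and $p + \sum_r |u_r| < k_i$. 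Only finitely many such patterns arise across all $i = 1, \dots, m$.

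For each pattern $\mathcal{P}$ I would consider the morphism $P_{\mathcal{P}} : \GL_n(\CC) \to \GL_n(\CC)$ defined by $P_{\mathcal{P}}(X) = u_0 X^{\epsilon_1} u_1 \cdots X^{\epsilon_p} u_p$, and the Zariski-closed subvariety $V_{\mathcal{P}} := \{X \in \GL_n(\CC) : P_{\mathcal{P}}(X) = g_i\}$. The crucial observation is that $P_{\mathcal{P}}(\I) = u_0 u_1 \cdots u_p$ is an element of $\Gamma$ that admits an $S^{\pm 1}$-word of length $\sum_r |u_r| < k_i$; since $\ell_{\Gamma, S}(g_i) = k_i$, this forces $P_{\mathcal{P}}(\I) \ne g_i$, so $\I \notin V_{\mathcal{P}}$.

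Next I would invoke the Zariski closure $V := \overline{\Gamma}$, which is an algebraic subgroup of $\GL_n(\CC)$, with identity component $V^0$. Because $\Gamma$ is infinite, $V$ is positive-dimensional, $V^0$ is irreducible, and the finite-index subgroup $\Gamma^\circ := \Gamma \cap V^0$ is Zariski dense in $V^0$. Since $\I \in V^0 \setminus V_{\mathcal{P}}$, each $V_{\mathcal{P}} \cap V^0$ is a proper closed subvariety of the irreducible variety $V^0$, and hence the finite union $W := \bigcup_{\mathcal{P}} (V_{\mathcal{P}} \cap V^0)$ is a proper closed subvariety of $V^0$, so $\Gamma^\circ \setminus W$ is Zariski dense and in particular infinite. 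Select distinct $w_1, w_2, \ldots \in \Gamma^\circ \setminus W$, each one also avoiding the finite set $S^{\pm 1}$ and all previously chosen $w_{j'}^{\pm 1}$, and set $S_j := S \cup \{w_j\}$. By construction $\ell_{\Gamma, S_j}(g_i) = k_i$ for every $i$ and every $j$.

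Pairwise distinctness of the metrics is then immediate: for $j \ne j'$ one has $\ell_{\Gamma, S_j}(w_j) = 1$ while $\ell_{\Gamma, S_{j'}}(w_j) \ge 2$, by the choice of $w_j \notin S^{\pm 1} \cup \{w_{j'}^{\pm 1}\}$. The hardest step will be the algebraic-geometric one, namely verifying that $V_{\mathcal{P}} \cap V^0$ is a proper subvariety of the \emph{irreducible} $V^0$ and that $\Gamma^\circ$ is Zariski dense in $V^0$; the latter is a standard consequence of the fact that the cosets of $V^0$ in $V$ are precisely its irreducible components, but must be invoked with some care since the Zariski closure of a subgroup need not itself be irreducible.
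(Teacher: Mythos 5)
Your proof is correct and follows essentially the same strategy as the paper's: the choices of a new generator $w$ that would shorten some $g_i$ are confined to finitely many Zariski-closed sets of the form $\{X : u_0 X^{\epsilon_1}u_1\cdots X^{\epsilon_p}u_p = g_i\}$, none of which contains the identity, and Zariski density of $\Gamma$ supplies infinitely many good choices. Your explicit passage to the identity component $V^0$ and to $\Gamma\cap V^0$ is in fact a refinement worth keeping: the paper's concluding step tacitly assumes that the Zariski closure of an infinite linear group cannot be covered by finitely many proper closed subsets, which is exactly where irreducibility of $V^0$ is needed when $\overline{\Gamma}$ is reducible, and your argument supplies it. The remaining differences are cosmetic --- the paper builds a nested chain $S_j = S\cup\{t_1,\dots,t_j\}$ by iterating the one-element extension, whereas you take $S_j=S\cup\{w_j\}$ for infinitely many distinct $w_j$ all at once --- except for one tiny omission: you should also exclude $w_j=\I$ (one more point to avoid), since otherwise $\ell_{\Gamma,S_j}(w_j)=0$ rather than $1$ and the final distinctness comparison for that index breaks down.
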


\begin{proof} Since $\Gamma$ is fixed, in interest of conciseness we will drop it from the subscript. 
Let $S= \{ s_1, \dots, s_r \}$. First we will construct an infinite number of generating sets $S_j$ that fulfill condition (1). 
In fact, the constructed family will be of the form $S_j= S \cup \{ t_j \}$ for some $t_k \in \Gamma$ with this property. 
Let $F(x_1, \dots, x_r, y)$ denote the group generated freely by the variables $x_1, \dots, x_r, y$. For each $1 \le  i \le m$, and each word $w \in 
F(x_1, \dots, x_r, y)$ with word length (with respect to $x_1, \dots, x_r, y$)
less than $ k_i$, consider the set 
\[ A(i,w)= \{ h \in G: w(s_1, \dots, s_r, h)= g_i \}. \]
Clearly the set 
\[ A_i= \bigcup_{ \ell(w) < n_i} A(i,w) \]
consists of those $h \in H$ with the property that the length of $g_i$ with respect to the extended generating set $S'= S \cup \{ h \}$ is less than $n_i$. 
Finally, set $A= \bigcup_{1 \le i \le m} A_i$. It suffices to show that $G-A$ is infinite. To see this, denote by $\G$ the Zariski closure of $\Gamma$ in 
$\GL_n(\CC)$, and consider the equality 
\[ w(s_1, \dots, s_r, h)= g_i  \]
as an equation in entries $h_{ij}$ of the matrix $h$. It is clear that this 
is a set of polynomial equations. This means that the set of solutions is a Zariski closed subset of $\GL_n(\CC)$. Note also that since the length of $g_i$ is $n_i > \ell(w)$, substituting the identity matrix for $h$ does not yield an equality. In other words, each one of these Zariski closed sets is proper. Now, if 
$\Gamma-A$ is finite, then by adding finitely many points $G$, and hence it
Zariski closure is a union of finitely many proper Zariski-closed sets. This is a contradiction unless $\Gamma$ is finite.  This shows that there are infinitely many 
$S_j= S \cup \{  t_j \}$ that satisfy (1). 

We will now proceed as follows: start with the generating set $S$ as above and let $S_1= S \cup \{ t_1 \} $ be a new generating set satisfying (1). Note that 
$\ell_{S_1}(t_1)=1$, while $\ell_{S}(t_1)>1$, hence $\ell_S$ and $\ell_{S_1}$ are different metrics. 

Now, applying 
what has already been proved to the set $ \{ g_1, \dots, g_m, t_1 \}$ we can find a new generating sense $S_2= S_1 \cup \{ t_2 \} $ with $|S_2|=|S_1|+1$ such that 
 $$ \ell_{ S_2}(g_i)=k_i$$ 
for all $ 1 \le i \le m$, and $j \ge 1$ and $ \ell_{S_2}(t_1)=1$. Note that $\ell_{S_2}(t_2)=1$, while since $t_2 \not\in S_1$ we have 
$\ell_{S}(t_2)>1$ and $\ell_{S_1}(t_2)>1$. In particular, the metric $\ell_{S_2}$ is different from both $\ell_S$ and $\ell_{S_1}$. The argument can now be continued by induction in the same fashion such that $S_j = S \cup \{ t_1, \dots, t_j \}$. Now, using the argument above, we find $t_{ j+1} \not \in S_j$  
such that if $S_{ j+1}= S_j \cup \{ t_{ j+1} \}$ then $ \ell_{ S_{j+1}}(g_1)=k_1, \dots, \ell_{ S_{ j+1}} ( g_m)= k_m$. Now, since $\ell_{ S_{ j+1}}(t_{j+1})=1$, while
$\ell_{S_q}( t_{ j+1} )>1$, we deduce that $\ell_{S_{ j+1} }$ is distinct from all the previously constructed metrics.

\end{proof}

\section{Conclusion} 
In this paper we have shown that one can that group theoretic phenomenon of distortion in arithmetic groups can be used to build a new symmetric-key cryptographic scheme. Arithmetic groups, on the one hand, have the advantage that their elements can be communicated rather easily. On the other hand, the inherent complexity of these groups and their rich subgroup structure has the potential to turn them into fertile ground for cryptographic purposes.  Let us conclude the article with two remarks. 

First note that using the arithmetic group $\SL_n(\ZZ)$ has several advantages over hyperbolic groups, one of which is that elements in $\SL_n(\ZZ)$ can be easily communicated without revealing how they are expressed in terms of generators of the group. This can, of course, also be done for those hyperbolic groups which are linear, but there does not seem to be an obvious way of replicating this for general hyperbolic groups. 
 
Solving the geodesic problem seems to be difficult for arithmetic groups. However, a modification of the above protocol can be made to work if one can solve the {\it modified geodesic problem}.  Given an arbitrary element $g \in \SL_3(\ZZ)$,  the modified geodesic problem find generators $g_1, \dots, g_m \in E$  such that $g= \prod_{1 \le i \le m }  g_i^{\pm 1}$ and $m< C\ell_G(g)$ for a uniform constant $C$.

The question has been studied in \cite{Riley} for the case $G=\SL_n(\ZZ)$, where such a result is established with constant $c(n)=O(n^n)$ which is impractical for real applications. Since the lower bound proven in \cite{Riley} is independent of $n$, one may inquire if the constant can be improved or even made independent of $n$.

\section*{Acknowledgements}
Delaram Kahrobaei is partially supported by a PSC-CUNY grant from the CUNY Research Foundation, the City Tech Foundation, and ONR (Office of Naval Research) grant N00014-15-1-2164. Delaram Kahrobaei has also partially supported by an NSF travel grant CCF-1564968 to IHP in Paris.

\end{document}